\newtheorem{prop}{Proposition}
\newtheorem{thm}[prop]{Theorem}
\newtheorem{lem}[prop]{Lemma}
\theoremstyle{definition}
\newtheorem*{defn}{Definition}
\newtheorem*{rem}{Remark}
\newtheorem*{ack}{Acknowledgements}
\def\co{\colon\thinspace}
\newcommand{\C}{\mathbb{C}}
\newcommand{\CP}{\mathbb{C}\mathrm{P}}
\newcommand{\rmd}{\mathrm{d}}
\newcommand{\rme}{\mathrm{e}}
\newcommand{\rmi}{\mathrm{i}}
\newcommand{\wtOm}{\widetilde{\Omega}}
\newcommand{\R}{\mathbb{R}}
\newcommand{\wtW}{\widetilde{W}}
\newcommand{\Z}{\mathbb{Z}}
\DeclareMathOperator{\Int}{Int}
\begin{document}

\author[H.~Geiges]{Hansj\"org Geiges}
\address{Mathematisches Institut, Universit\"at zu K\"oln,
Weyertal 86--90, 50931 K\"oln, Germany}
\email{geiges@math.uni-koeln.de}
\author[K.~Zehmisch]{Kai Zehmisch}
\address{Mathematisches Institut, WWU M\"unster,
Einstein\-stra\-{\ss}e 62, 48149 M\"unster, Germany}
\email{kai.zehmisch@uni-muenster.de}

\title{Cobordisms between symplectic fibrations}

\date{}

\begin{abstract}
We discuss the existence and non-existence of cobordisms
between symplectic surface bundles over the circle.
\end{abstract}

\subjclass[2010]{53D35; 32Q65, 57R17, 57R90}

\thanks{H.~G.\ and K.~Z.\ are partially supported by DFG grants
GE 1245/2-1 and ZE 992/1-1, respectively.}

\maketitle

%%%%%%%%%%%%%%%%%%%%%%%%%%%%%%%%%%%%%%%%%%%%%%%%%%%%%%%%%%%%%%%%%%%%%%

\section{Introduction}
In \cite{elia04} Eliashberg showed that any weak filling of a contact
$3$-manifold can be embedded into a closed symplectic $4$-manifold.
His result is based on the construction of a symplectic cap.
One part of that cap is a cobordism from the given
contact manifold to a symplectic surface bundle over~$S^1$.
Such cobordisms also play a role in~\cite{dgz14}, which led us
to look at the rigidity or flexibility inherent in the
construction of cobordisms between symplectic fibrations.

\begin{defn}
A \textbf{symplectic fibration} is a pair $(V,\omega)$ consisting
of a closed, connected, oriented $3$-manifold $V$ fibred over the circle
$S^1$ and a closed $2$-form $\omega$ on $V$ that restricts to an area form on
each fibre. The \textbf{genus} of the fibration is the genus of
its fibre.

Let $\Sigma$ be a fibre of the symplectic fibration~$(V,\omega)$.
The \textbf{fibre area}
\[ \int_{\Sigma}\omega=\bigl\langle[\omega],[\Sigma]\bigr\rangle,\]
because of its homological interpretation,
is independent of the choice of fibre.
\end{defn}

Our first result is a rigidity statement for spherical fibrations.

\begin{thm}
\label{thm:non-ex}
Let $(W,\Omega)$ be a compact, connected symplectic $4$-manifold
such that each boundary component $(V,\Omega|_{TV})$ is
a symplectic fibration. If one of these surface bundles
has spherical fibres, then so do all the others.
\end{thm}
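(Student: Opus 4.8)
\emph{Setting up holomorphic spheres.}
The plan is to produce a foliation of $\Int W$ by $J$-holomorphic spheres and read off the boundary fibrations from it. Let $V_0$ be the boundary component with spherical fibre. Pushing a fibre $\Sigma_0\cong S^2$ slightly into $\Int W$ along a collar $V_0\times[0,\varepsilon)$ yields a symplectically embedded sphere $S$ with trivial normal bundle, so $S\cdot S=0$, and with $\langle[\Omega],[S]\rangle$ equal to the positive fibre area of $V_0$; by adjunction $\langle c_1(TW,\Omega),[S]\rangle=\chi(S^2)+S\cdot S=2$. On each boundary component $V_i$ the fibration $\pi_i\co V_i\to S^1$ and $\Omega|_{TV_i}$ define a stable Hamiltonian structure $(\Omega|_{TV_i},\pi_i^*\rmd\theta)$: the Reeb vector field spans $\ker(\Omega|_{TV_i})$, is transverse to the fibres, and every closed Reeb orbit meets a fibre positively. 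I would take an $\Omega$-tame $J$ making $S$ holomorphic and cylindrical for this structure on a collar of each $V_i$; then the inward push-offs of the fibres of $V_i$ are $J$-holomorphic, and the collar coordinate is subharmonic along any $J$-holomorphic curve, hence constant along a closed one.

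\emph{The sphere family.}
Consider the moduli space $\mathcal{M}$ of $J$-holomorphic spheres homologous to $S$. Automatic transversality (embedded spheres with self-intersection $0\ge-1$ are regular) makes $\mathcal{M}$ a smooth surface, and positivity of intersections together with $S\cdot S=0$ makes its members embedded and pairwise disjoint, so they foliate an open set $U\subseteq\Int W$ containing $S$. Gromov--SFT compactness controls the ends of this family: a nodal sphere in $\Int W$ must, by $\langle c_1,[S]\rangle=2$ and positivity, be a transverse union of two $(-1)$-spheres, and since these exceptional spheres lie in $\Int W$ away from the collars (where all closed $J$-curves have self-intersection $0$) I can blow them down without touching $\partial W$, reducing to the case where no nodal degeneration occurs; the only remaining degeneration is a holomorphic building breaking off components in a symplectization $\R\times V_i$.

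\emph{The crux, and the conclusion.}
Call $V_i$ \emph{good} if $[\Sigma_i]=[S]$ (which forces the fibre genus to be $0$ by adjunction) and \emph{bad} otherwise. If $V_i$ is bad, its fibre push-offs $\Sigma_i\times\{c\}$ are $J$-holomorphic with $[\Sigma_i]\cdot[S]=0$ (disjoint boundary components) and $[\Sigma_i]\ne[S]$, so positivity of intersections forces every member of $\mathcal{M}$ to avoid a collar of $V_i$; in particular $\mathcal{M}$ cannot break over a bad component. If $V_i$ is good, a collar of $V_i$ is foliated by the $J$-spheres $\Sigma_{i,\theta}\times\{c\}\in\mathcal{M}$, which converge to the fibres of $V_i$ as $c\to0$, and subharmonicity of the collar coordinate shows that any member of $\mathcal{M}$ meeting this collar is one of these leaves; so breaking over a good component is only this trivial limiting onto its fibres. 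Moreover a limit building can carry no \emph{closed} symplectization component other than such a fibre, and a count of the asymptotic Reeb orbits — each meeting a fibre positively, against the genus-$0$ and class-$[S]$ constraints — excludes any non-trivial breaking. Hence $U$ is closed in $\Int W$; since $\Int W$ is connected, $U=\Int W$. If some $V_i$ were bad, $U=\Int W$ would meet its collar, contradicting the above; so every boundary component is good, and in particular all the fibres are spheres.

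\emph{The main difficulty.}
The real work is the boundary analysis of the third paragraph: showing via SFT compactness that the sphere family cannot escape into a non-spherical end by breaking along closed orbits of the flow transverse to the fibres — equivalently, that a neighbourhood of a non-spherical symplectic fibration contains no $J$-holomorphic spheres at all. Granting this, everything else (the push-off, the adjunction count, automatic transversality, the blow-down, the connectedness argument) is routine.
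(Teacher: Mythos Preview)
Your approach is essentially correct and genuinely different from the paper's.  The paper never works directly on $(W,\Omega)$ with its boundary: it first caps off all boundary components via Eliashberg's construction to obtain a \emph{closed} symplectic manifold $(\wtW,\wtOm)$ containing disjoint symplectic surfaces $S\cong S^2$ and $\Sigma\cong\Sigma_g$ ($g>0$) of self-intersection zero, and then derives a contradiction in one of several ways --- by fibre-summing with auxiliary pieces to produce a symplectic filling of a contact manifold containing a symplectic $0$-sphere (violating McDuff's theorem), or by invoking McDuff's classification of rational/ruled $4$-manifolds on the blown-down pair $(\wtW_0,S)$, or via Seiberg--Witten, or via confoliations.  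Your argument avoids the capping and the auxiliary constructions entirely: you keep the boundary, make it $J$-cylindrical for the stable Hamiltonian structure coming from the fibration, and run the $[S]$-sphere foliation directly inside $W$.  What this buys you is a self-contained proof using only positivity of intersections and Gromov compactness; what the paper's route buys is modularity --- it reduces everything to citable black boxes.

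Two remarks on the execution.  First, the positivity argument you give for bad collars is already the whole story and applies equally at good collars: any $[S]$-sphere (or either component of a nodal limit, since $[E]\cdot[S]=0$ as well) meeting a collar must, by positivity against the holomorphic leaves $\Sigma_i\times\{c\}$, coincide with a leaf.  Hence every curve in $\overline{\mathcal{M}}$ is either a leaf in a good collar or lies in the fixed compact set $W\setminus(\text{collars})$.  This means ordinary Gromov compactness suffices, and the SFT-building analysis you flag as ``the main difficulty'' is not needed at all; you should drop that paragraph and the subharmonicity appeal (which, incidentally, is harmonicity here since $\rmd\alpha_i=0$, and is correct but redundant).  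Second, the blow-down step must be iterated until no exceptional class survives in the interior; the same positivity argument shows each new exceptional sphere avoids all collars, so the boundary structure is untouched throughout.  With these simplifications your proof is clean and shorter than the paper's.
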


In particular, there can be no symplectic round handle
construction for symplectic fibrations, in contrast with
the situation for contact manifolds \cite{adac14,durs13}.

For cobordisms between surface bundles of genus greater than zero,
on the other hand, we shall see that the situation is completely flexible;
for cobordisms between sphere bundles the fibre area is the
only cobordism invariant.
In order to formulate the results, we need to be a little more specific
about the concept of cobordisms between symplectic fibrations,
cf.~\cite{elia04}. We prefer to speak simply of `cobordisms' rather than
`symplectic cobordisms', since the latter is commonly used for cobordisms
inducing contact structures on the boundary.

The $2$-form $\omega$ on a symplectic fibration
$(V,\omega)$ defines an orientation of the fibres of~$V$, and the
orientation of $V$ then determines an orientation on the $1$-dimensional
\emph{characteristic foliation} of $V$ determined by the kernel of~$\omega$,
which is transverse to the fibres. Since $\omega$ is closed, any flow along
this characteristic foliation preserves $\omega$ by Cartan's
formula for the Lie derivative. This allows one to define
an area-preserving \emph{holonomy diffeomorphism} from one
fixed fibre to itself, which determines the bundle up to a
fibre-preserving diffeomorphism.

\begin{defn}
Let $(V^i,\omega^i)$, $i=0,1$, be two symplectic fibrations.
A \textbf{cobordism} from $(V^0,\omega^0)$ to $(V^1,\omega^1)$
is a compact, connected symplectic $4$-manifold $(W,\Omega)$ with
\[ \partial (W,\Omega)=(V^1,\omega^1)\sqcup (-V^0,\omega^0).\]
\end{defn}

\begin{rem}
Here $-V^0$ stands for $V^0$ with the reversed orientation.
By the expression for $\partial (W,\Omega)$ we mean that
with $W$ oriented by $\Omega\wedge\Omega$, the
oriented boundary $\partial W$ equals $V^1\sqcup-V^0$, and
$\Omega|_{TV^i}=\omega^i$ for $i=0,1$.

Recall that our symplectic fibrations are, by definition, connected
manifolds. This assumption is made merely for convenience.
The cobordism-theoretic results below have fairly straightforward analogues
when we allow several boundary components at one or the
other end of the cobordism.
\end{rem}

According to the symplectic neighbourhood theorem for hypersurfaces
\cite[Exercise~3.36]{mcsa98}, the restriction $\Omega|_{TV^i}$
determines $\Omega$ in a neighbourhood of~$V^i$. In fact,
up to symplectomorphism, $\Omega$ looks like
$\omega^i+\rmd (t\alpha^i)$ on a neighbourhood
of~$V^i$, where $\alpha^i$ is a $1$-form that
does not vanish on the characteristic foliation, and $t$
is the collar parameter.

This makes the above notion of cobordism both reflexive and transitive,
but it is not clear, a priori, that it is symmetric ---
observe that the holonomy diffeomorphisms of $(V,\omega)$
and $(-V,\omega)$ are inverses of each other. However, symmetry
of the cobordism relation is one of the consequences of
the following flexibility result.

\begin{thm}
\label{thm:ex}
(a) Any two symplectic fibrations of respective genus $g,g'>0$
are cobordant.

(b) Two spherical symplectic fibrations are cobordant if and only if they
have the same fibre area.
\end{thm}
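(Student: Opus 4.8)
The plan is to prove part~(b) by a Moser-type deformation together with the pseudoholomorphic-sphere technique behind Theorem~\ref{thm:non-ex}, and part~(a) by an explicit handle calculus, where the main difficulty lies. For the sufficiency in~(b): since $\mathrm{Symp}(S^2,\sigma)$ is connected and $H^1(S^2)=0$, every spherical symplectic fibration of fibre area $a$ is fibre-preservingly symplectomorphic to $(S^2\times S^1,\omega)$ with $\omega$ a fibration form of fibre area $a$; so we may assume $V^0=V^1=V=S^2\times S^1$ with the standard projection and fibration forms $\omega^0,\omega^1$ of fibre area $a$. Since $H^2(V;\R)\cong\R$ is detected by integration over the fibre, $[\omega^0]=[\omega^1]$, so $\omega^s:=(1-s)\omega^0+s\omega^1$ is, for each $s\in[0,1]$, a convex combination of positive area forms on the fibre and hence again a fibration form, with $[\omega^s]$ independent of~$s$. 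Choosing a smooth family of primitives $\beta_s$ on $V$ with $\rmd\beta_s=\partial_s\omega^s$, a large constant~$C$, and the $S^1$-coordinate $\theta$, one checks that
\[
\Omega:=\omega^s+\rmd s\wedge(\beta_s+C\,\rmd\theta)
\]
is closed on $W:=V\times[0,1]$ with $\Omega\wedge\Omega=2\,\rmd s\wedge(\beta_s+C\,\rmd\theta)\wedge\omega^s>0$, because $\rmd\theta\wedge\omega^s$ is a positive volume form on $V$; after fixing orientations (replace $s$ by $1-s$ if necessary) this is the required cobordism.

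For the necessity in~(b), let $(W,\Omega)$ be a cobordism between spherical fibrations of fibre areas $a_0$ and $a_1$, and choose an $\Omega$-compatible almost complex structure that near the boundary is adapted to the fibrations, so that the fibres of $V^0$ become embedded $J$-holomorphic spheres of zero self-intersection and $\Omega$-area~$a_0$. Propagating these through $W$, by the Gromov--McDuff analysis of $J$-holomorphic spheres that also underlies Theorem~\ref{thm:non-ex}, yields a ruling of $W$ by $J$-holomorphic spheres---with at most finitely many nodal fibres---over a connected base surface whose two boundary circles carry the fibrations of $V^0$ and~$V^1$. All fibres of this ruling are homologous in $W$, so pairing with $[\Omega]$ gives $a_0=a_1$.

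For part~(a) the plan is to join any two positive-genus symplectic fibrations by a chain of explicit symplectic cobordisms. First, because positive Dehn twists generate the mapping class group of a surface as a monoid, the isotopy class of the holonomy can be changed arbitrarily---at fixed genus and fibre area---by realising the cobordism as a symplectic Lefschetz fibration over the annulus with prescribed vanishing cycles and underlying monodromy. Secondly, consecutive genera are joined by attaching \emph{symplectic round $1$-handles}, i.e.\ circle families of Weinstein $1$-handles attached along two characteristic orbits $\{p_0,p_1\}\times S^1\subset\Sigma_h\times S^1$: such a handle performs fibrewise $0$-surgery on two points of the fibre, raising its genus by one, and a complementary handle realises the reverse move $h+1\rightsquigarrow h$ whenever the target genus is again positive; their sizes and framings can moreover be tuned to realise any prescribed change of fibre area, while a further cobordism of the same type disposes of the residual discrepancy of the holonomy in $\mathrm{Symp}_0$. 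None of these last changes can be effected along a cylinder, where the two fibration forms are forced to be cohomologous, which is why the topology must change. Composing these cobordisms connects any genus-$g$ fibration of fibre area $A$ to any genus-$g'$ one of fibre area $A'$ for all $g,g'\ge1$ and $A,A'>0$; since each elementary move has an inverse of the same type, the cobordism relation is in particular symmetric, hence an equivalence relation, on positive-genus fibrations.

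The step I expect to be the main obstacle is the construction of the genus-changing symplectic round $1$-handle: one must produce a model symplectic form on $S^1\times(D^1\times D^2)$ matching the fibration normal form $\omega+\rmd(t\alpha)$ along the attaching region, and verify that gluing it yields a closed $2$-form restricting to an area form on the new, higher-genus fibres. The positive-genus hypothesis is used precisely here---$\Sigma_h$ with $h\ge1$ has room to position the two attaching orbits compatibly---whereas for $h=0$ no such handle can exist, in agreement with Theorem~\ref{thm:non-ex}. Everything else (the Moser deformation, the Lefschetz handles over the annulus, and the bookkeeping of fibre area and holonomy) is routine once this model is available.
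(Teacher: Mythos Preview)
Your Moser-type argument for the sufficiency in~(b) is correct and in fact tidier than what the paper does; the paper instead realises each spherical fibration inside $(S^2\times\C,\omega_0+\rmd(r^2\,\rmd\varphi))$ via the Hamiltonian generating the holonomy, or appeals to Eliashberg's caps and a fibre connected sum. Your proposed necessity argument---propagating the boundary fibres as $J$-holomorphic spheres directly through the cobordism with boundary---is plausible, but the compactness and transversality analysis near a fibration boundary (which is neither $J$-convex nor totally real) has to be spelled out; the paper sidesteps this by capping off to a closed manifold, invoking McDuff's classification of rational and ruled symplectic $4$-manifolds, and then running a light-cone argument in $H_2$ to conclude that $[S]$ is a positive multiple of~$[S']$.

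The genuine gap is in part~(a). Your assertion that the round $1$-handle ``can be tuned to realise any prescribed change of fibre area'' is false. In the cobordism $W=(V_g\times[0,1])\cup H$ the new fibre $\Sigma_1\subset V_{g+1}$ and the old fibre $\Sigma_0\subset V_g$ are homologous: their difference is the annulus $\{\theta_0\}\times D^1\times\partial D^2$ minus the two attaching discs $\{\theta_0\}\times\{\pm 1\}\times D^2$, which together bound the $3$-ball $\{\theta_0\}\times D^1\times D^2$ inside~$H$. Hence $\langle[\Omega],[\Sigma_1]\rangle=\langle[\Omega],[\Sigma_0]\rangle$ and the fibre area is forced to be preserved. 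The same holds for Lefschetz singularities over the annulus. So even granting the symplectic round $1$-handle---whose existence you rightly flag as the crux, and which is genuinely doubtful: the analogous round singularities in broken Lefschetz fibrations are only \emph{near}-symplectic, with $\Omega^2$ vanishing along a circle---your chain of elementary cobordisms cannot connect two positive-genus fibrations of different fibre area. Since part~(a) explicitly asserts cobordism between fibrations of arbitrary genera \emph{and} arbitrary fibre areas, the scheme is incomplete.

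The paper's route is entirely different and avoids this obstruction. For fixed genus and area it caps off both fibrations via Eliashberg and takes a Gompf fibre sum along the resulting symplectic copies of~$\Sigma_g$. To change genus and area simultaneously it takes $\Sigma_g\times\bigl(T^2\setminus\Int(D^2)\bigr)$ with a product form, perturbs so that a product torus $T=a\times b$ (with $a\subset\Sigma_g$ homologically non-trivial) becomes symplectic, does the same for~$\Sigma_{g'}$, and fibre-sums the two pieces along the tori. The resulting cobordism has boundary fibres $\Sigma_g\times\{*\}$ and $\Sigma_{g'}\times\{*'\}$ lying in different summands, and these are \emph{not} homologous in the cobordism, so their areas can be chosen independently. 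The hypothesis $g,g'>0$ enters exactly here: one needs a non-trivial curve in each surface factor to build the torus along which to sum.
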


Symplectic fibrations are a special case of odd-symplectic manifolds
in the sense of Ginzburg~\cite{ginz92}. So our results
are in some sense dual to his. Ginzburg considers cobordisms
carrying odd-symplectic forms inducing given symplectic 
forms on the boundary, whereas here we deal with cobordisms
carrying symplectic forms inducing symplectic fibrations on the
boundary. One could investigate the more general cobordism relation
where the boundary condition is weakened to odd-symplectic,
also in higher dimensions.

Our cobordisms are always supposed to be connected and to have two
non-empty boundary components of opposite orientations. Without that
assumption, any symplectic fibration would be null-cobordant
(in either direction) by~\cite{elia04}, and hence any two symplectic
fibrations would be cobordant by a disconnected cobordism.
\section{Proof of Theorem~\ref{thm:non-ex}}
Let $(W,\Omega)$ be a compact symplectic $4$-manifold satisfying the
assumptions of Theorem~\ref{thm:non-ex}. By the results in Section~3
of \cite{elia04}, the boundaries of $W$ can be capped off, i.e.\
$(W,\Omega)$ embeds symplectically into a closed symplectic $4$-manifold
$(\wtW,\wtOm)$.

Arguing by contradiction, we assume that $(W,\Omega)$ has a
spherical symplectic fibration as one boundary
component, and a further boundary component which
is a symplectic bundle over $S^1$ with fibre a closed, orientable surface
$\Sigma_g$ of genus $g>0$. Then $(\wtW,\wtOm)$ contains a
symplectically embedded copy $S$ of $S^2$ and a symplectically
embedded copy $\Sigma$ of $\Sigma_g$, where $S$ is disjoint from $\Sigma$,
and each has self-intersection number zero.

We next construct two further symplectic $4$-manifolds containing
a symplectically embedded $S^2$ or $\Sigma_g$, respectively, with
self-intersection number zero. By fibre connected sum (also called `symplectic
sum') in the sense of Gompf~\cite{gomp95} we shall then build a symplectic
$4$-manifold with contradictory properties.
\subsection{The building blocks}
Our first building block will
be a symplectic $4$-manifold with a single convex boundary component
and a symplectically embedded surface $\Sigma'$ of some genus $g'>0$
with self-intersection number zero.

Start with a compact symplectic $4$-manifold with two
boundary components $M,M'$, both of which are supposed to be strongly
convex boundaries, so that they carry induced contact structures $\xi,\xi'$
cf.\ \cite[Chapter~5]{geig08}.
Examples of such manifolds have been constructed by McDuff \cite{mcdu91}
and the first author~\cite{geig95}. The
contact structure $\xi'$ is supported (in the sense of
Giroux~\cite{giro02}) by an open book of some genus $g'$;
by stabilising the open book, if necessary, we may assume $g'\geq 1$.
(In fact, Etnyre~\cite{etny04} has shown that any contact structure
induced on a boundary component of a symplectic $4$-manifold
with disconnected convex boundary can only be supported by
an open book of genus at least~$1$; the argument that follows would
likewise produce a contradiction if $g'$ were zero.)
Eliashberg's capping construction~\cite{elia04} applied to
the boundary component $(M',\xi')$ then produces a symplectic $4$-manifold
$(W_{g'},\Omega_{g'})$ with convex boundary $(M,\xi)$. This symplectic
manifold contains, inside the cap, a symplectically
embedded $\Sigma'$ --- resulting from capping off a page ---
having the desired properties.

Our second building block is described in the following proposition.

\begin{prop}
\label{prop:X}
Given $g,g'>0$, there is a closed symplectic $4$-manifold
$(X_g^{g'},\Omega_g^{g'})$ containing disjoint symplectically embedded
copies of $\Sigma_g$ and $\Sigma_{g'}$, each of self-inter\-section zero.
\end{prop}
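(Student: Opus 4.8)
The plan is to take $X_g^{g'}$ to be a product $\Sigma_g\times\Sigma_{g'}$ with an \emph{arbitrary} split symplectic form $\Omega_g^{g'}=\omega_g\oplus\omega_{g'}$, the point being to produce the two surfaces not as fibres of the two projections (those necessarily meet, their classes having intersection number one) but as graphs of maps whose differential has rank at most one everywhere. Concretely, I would fix non-separating embedded circles $C\subset\Sigma_g$ and $C'\subset\Sigma_{g'}$ and choose smooth maps $\psi\co\Sigma_g\to\Sigma_{g'}$ and $\phi\co\Sigma_{g'}\to\Sigma_g$ with images exactly $C'$ and $C$, so that $\psi$ factors as $\Sigma_g\to C'\hookrightarrow\Sigma_{g'}$ and $\phi$ as $\Sigma_{g'}\to C\hookrightarrow\Sigma_g$. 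Since $[C]\ne 0$ in $H_1(\Sigma_g;\Z)$ one can in addition take $\psi|_C\co C\to C'$ to be a diffeomorphism, and then take $\phi|_{C'}\co C'\to C$ to be that diffeomorphism followed by the rotation of $C\cong S^1$ by $\pi$. The proposed surfaces are the graphs $\Gamma_\psi=\{(p,\psi(p))\}\cong\Sigma_g$ and $\Gamma_\phi=\{(\phi(q),q)\}\cong\Sigma_{g'}$.

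The four properties should then all come out cheaply. Graphs are always embedded. For symplecticity, the pullback of $\omega_g\oplus\omega_{g'}$ to $\Gamma_\psi$ along $p\mapsto(p,\psi(p))$ is $\omega_g+\psi^*\omega_{g'}$, and $\psi^*\omega_{g'}=0$ since $\psi$ factors through the one-dimensional $C'$; so $\Omega_g^{g'}$ restricts to the area form $\omega_g$ there, and symmetrically on $\Gamma_\phi$. For self-intersection zero, the normal bundle of $\Gamma_\psi$ in $\Sigma_g\times\Sigma_{g'}$ is $\psi^*T\Sigma_{g'}$, whose Euler class vanishes because $\psi^*\colon H^2(\Sigma_{g'})\to H^2(\Sigma_g)$ factors through $H^2(S^1)=0$; likewise for $\Gamma_\phi$. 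And $\Gamma_\psi\cap\Gamma_\phi$ is the set of $(p,q)$ with $q=\psi(p)$ and $p=\phi(q)$, i.e.\ $\operatorname{Fix}(\phi\circ\psi)$; since $\phi\circ\psi$ maps $\Sigma_g$ into $\phi(C')=C$ and restricts on $C$ to $\phi|_{C'}\circ\psi|_C$, a rotation by $\pi$, this set is empty, so the two surfaces are disjoint.

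So the only step carrying any content is the disjointness, and routing both maps through embedded circles is exactly what turns it into the triviality that a half-turn of $S^1$ has no fixed point. (The hypothesis $g,g'>0$ enters only through the non-separating curves $C$ and $C'$; and because the maps are as degenerate as possible, both the symplecticity and the square-zero assertions hold for \emph{every} split form, with no rescaling of $\omega_g$ against $\omega_{g'}$ needed.)
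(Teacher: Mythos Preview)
Your argument is correct and takes a genuinely different route from the paper's. The paper builds $X_g^{g'}$ by perturbing the product form on $\Sigma_g\times\Sigma_{g'}$ so that a Lagrangian torus $T=a\times b$ becomes symplectic, then takes the Gompf fibre sum of two copies along~$T$; the disjoint surfaces are then $\Sigma_g\times\{*\}$ from one summand and $\{*\}\times\Sigma_{g'}$ from the other. You instead stay inside the product $\Sigma_g\times\Sigma_{g'}$ with an unperturbed split form and realise the two surfaces as graphs of maps factoring through embedded circles, arranging disjointness by the fixed-point-free half-turn trick.

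Your approach is more elementary: it avoids both the perturbation lemma \cite[Lemma~1.6]{gomp95} and the symplectic sum, and it works for \emph{any} split form without rescaling. One small point of exposition: when you say $\phi|_{C'}$ is ``that diffeomorphism followed by the rotation'', you of course mean $(\psi|_C)^{-1}$ followed by $\mathrm{rot}_\pi$, since $\phi|_{C'}$ must go from $C'$ to $C$; and the existence of $\bar\psi\co\Sigma_g\to C'\cong S^1$ restricting to a diffeomorphism on $C$ is guaranteed because $[C]$ is primitive in $H_1(\Sigma_g;\Z)$, so one may take the projection of a handle neighbourhood and extend by a constant. The paper's construction, while heavier, has the virtue that the same perturbation argument (making a Lagrangian torus symplectic) is reused verbatim in the proof of Lemma~\ref{lem:trivial}, so there it is not wasted machinery.
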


\begin{proof}
Start with the product $\Sigma_g\times\Sigma_{g'}$, equipped with
a product symplectic structure. Let $T\subset \Sigma_g\times\Sigma_{g'}$
be a Lagrangian torus given as the product of homotopically
non-trivial curves in $\Sigma_g$ and $\Sigma_{g'}$, so that
the homology class $[T]$ is non-trivial in $H_2(\Sigma_g\times\Sigma_{g'})$.
As explained in \cite[Lemma~1.6]{gomp95}, there is a symplectic
form on $\Sigma_g\times\Sigma_{g'}$ for which $T$ is symplectic;
see also the proof of Lemma~\ref{lem:trivial} for more details.
This new symplectic form can be chosen arbitrarily close to the product
form we started with; this allows us to assume that
$\Sigma_g\times\{*\}$ and $\{*\}\times\Sigma_{g'}$ are still symplectic
surfaces.

Now take two copies of this manifold, and perform a symplectic
sum as in \cite{gomp95} along the two copies of~$T$, which have
zero self-intersection. This
produces a closed symplectic $4$-manifold $(X_g^{g'},\Omega_g^{g'})$
containing disjoint symplectically embedded copies of $\Sigma_g$ and
$\Sigma_{g'}$, coming from a surface $\Sigma_g\times\{*\}$ in the first
summand and a surface $\{*\}\times\Sigma_{g'}$ in the second summand;
we only have to ensure that the respective point $*$ is chosen away from
the curves that define~$T$.
\end{proof}

Notice that for the symplectic summing we always assume implicitly
that the symplectic forms on the two summands have been scaled
such that they induce area forms on the relevant surfaces
of equal total area; in this situation, the construction
from \cite{gomp95} is applicable.
\subsection{The symplectic manifold $(W',\Omega')$}
\label{subsection:contradiction}
We define $(W',\Omega')$ as the symplectic $4$-manifold
obtained by symplectically
summing $(\wtW,\wtOm)$, $(X_g^{g'},\Omega_g^{g'})$ and $(W_{g'},\Omega_{g'})$,
where the sum is taken along $\Sigma\subset\wtW$ and
$\Sigma_g\subset X_g^{g'}$, as well as along $\Sigma_{g'}\subset X_g^{g'}$
and $\Sigma'\subset W_{g'}$.

Observe that $(W',\Omega')$ has convex boundary $(M,\xi)$, and
it contains a symplectically embedded $2$-sphere $S$ of self-intersection
zero in the $(\wtW,\wtOm)$ summand. McDuff \cite[Theorem~5.1]{mcdu91}
showed that such a symplectic manifold cannot exist, cf.~\cite{geze13}.
By analysing the moduli space of holomorphic
spheres in $W'$ --- with respect to an almost complex structure $J$
tamed by $\Omega'$ for which $S$ is holomorphic and $M$ is $J$-convex ---
one would find a holomorphic sphere through every point on a path
joining $S$ to~$M$, contradicting the maximum principle at the
convex boundary~$M$.

This contradiction proves Theorem~\ref{thm:non-ex}.
\subsection{An alternative argument}
\label{subsection:alt}
One can prove Theorem~\ref{thm:non-ex} via a more direct route,
at the cost of quoting the deep results of McDuff~\cite{mcdu90}
on the classification of ruled symplectic $4$-manifolds.
Wendl~\cite{wend14} has written detailed lecture notes on
McDuff's work and subsequent developments.

Consider the manifold pair $(\wtW,S)$ constructed at the beginning
of the proof of Theorem~\ref{thm:non-ex}, together with the
symplectic surface $\Sigma\subset\wtW\setminus S$ of genus $g>0$.
By blowing down
any potential exceptional spheres in $\wtW\setminus S$, i.e.\
symplectic spheres of self-intersection~$-1$, we obtain
a minimal pair $(\wtW_0,S)$ in the sense of~\cite{mcdu90}.

Blowing down an exceptional sphere $E$ amounts to taking
a fibre connected sum of $(\wtW,E)$ with $(\CP^2,\CP^1)$.
Write $\nu E$ for a closed tubular neighbourhood of $E$ in $\wtW$;
likewise we write $\nu\CP^1$. Then blowing down $E$
means that we replace $\nu E$ by the $4$-ball
$D^4=\CP^2\setminus\Int(\nu\CP^1)$. The $S^1$-fibres of
$\partial(\nu\CP^1)$ are the Hopf fibres on the boundary
of the complementary $4$-ball $D^4$.
Hence, if $\Sigma$ intersects
$E$ transversely, then the effect on $\Sigma$ of blowing down $E$
is to replace the disjoint discs $\nu E\cap\Sigma\subset\Sigma$
by the discs in $D^4$ bounded by the Hopf fibres $\partial(\nu E)\cap\Sigma$.
We write $\Sigma^*\subset\wtW_0$ for the transformed surface
(the `proper transform').

Observe that $\Sigma^*$ has the same genus as $\Sigma$, but
its self-intersection number will have changed.
Any ordered pair of points in $\Sigma\cap E$ (including pairs made up
of twice the same point) will add $\pm 1$ to the self-intersection number
of~$\Sigma^*$, depending on whether the intersection points
have the same sign or not. This follows from the observation
that any two Hopf fibres bound holomorphic discs that intersect
positively in a single point.

We claim that the blow-down can be arranged in such a way that
there is an almost complex structure on $\wtW_0$,
regular for the class~$[S]$, with respect to
which $\Sigma^*$ is an immersed holomorphic curve.
The proof of \cite[Proposition~4.1]{mcdu90},
cf.\ \cite[Theorem~6.1]{wend14},
shows that $S$ is a fibre in a holomorphic ruling of $\wtW_0$.
The surface $\Sigma^*$ has to intersect one of the spherical fibres
geometrically, but the homological intersection is zero, since $\Sigma^*$
is disjoint from the fibre~$S$. By positivity of intersection,
$\Sigma^*$ would have to coincide with a spherical fibre, contradicting
the assumption that $\Sigma$ (and hence $\Sigma^*$) have positive genus.

It remains to prove the claim. Choose an $\wtOm$-compatible
almost complex structure~$J$
on $\wtW$ for which $\Sigma$ and $S$ are holomorphic, and which
is integrable near these two surfaces. By automatic transversality, see
\cite[Corollary~3.3.4]{mcsa04} or \cite[Theorem~2.27]{wend14},
this $J$ will be regular for the
homology class~$[S]$. As nicely explained in \cite[Theorem~5.1]{wend14},
one can now isotope each exceptional sphere in $\wtW\setminus S$
via symplectic spheres to a unique $J$-holomorphic one.
The curve $\Sigma$ cannot coincide with any exceptional sphere.
Hence, by positivity of intersection, this means in particular
that the intersections of the exceptional spheres with $\Sigma$
are isolated and all count positively.

Now make $J$ integrable near these exceptional spheres.
Then $\Sigma$ descends to an immersed holomorphic curve in~$\wtW_0$.
This completes the proof.

\vspace{2mm}

Beware that, in general, the proper transform $\Sigma^*$ of
a symplectic surface $\Sigma$ will not be symplectic.
Here is an example (with $\Sigma$
of genus~$0$). Start with the symplectic manifold
$\CP^2\#\overline{\CP}^2$, the blow-up of $\CP^2$ in a single point.
Let $E$ be the exceptional divisor and $\Sigma$ a transverse symplectic
copy of~$E$, so that the intersection numbers are $\Sigma\bullet E=-1$
and $\Sigma\bullet\Sigma=-1$. After blowing down~$E$, we have
$\Sigma^*\subset\CP^2$ with self-intersection number~$0$,
so it cannot be realised as a symplectic submanifold in $\CP^2$
for cohomological reasons. Indeed, if we tried to carry
out the argument above, after making $\Sigma$ holomorphic and
isotoping $E$ to its unique holomorphic representative, the two curves
would coincide.

\subsection{Broken Lefschetz fibrations and gauge theory}

The following two observations were made by \.Inan\c{c} Baykur.

\vspace{1mm}

(1) A \emph{near-symplectic structure} on a $4$-manifold is a closed
$2$-form satisfying $\omega^2\geq 0$ and a certain transversality
condition along the zero set of $\omega^2$, see~\cite{bayk09}.
In contrast with Theorem~\ref{thm:non-ex},
one can always find a near-symplectic cobordism between any two symplectic
fibrations, no matter what the fibre genera are. When the fibre areas of
the two symplectic fibrations are the same,
a cobordism can be provided by a broken Lefschetz fibration
in the sense of~\cite{bayk09} (originally introduced in
\cite{adk05} under the name `singular Lefschetz fibration'). In the
case of different symplectic areas one needs to appeal, in addition,
to Lemma~\ref{lem:trivial} below.

\vspace{1mm}

(2) Here is an alternative proof of Theorem~\ref{thm:non-ex},
which relies on Seiberg--Witten theory. Suppose $(W,\Omega)$ were a
compact symplectic $4$-manifold whose boundary components are symplectic
fibrations, with at least one of them spherical and one of higher genus.
Cap off all boundary components.
By taking further fibre connected sums, as in our proof of
Theorem~\ref{thm:non-ex}, one can ensure
that the resulting symplectic $4$-manifold
has Betti number $b_2^+>1$. Now take the fibre sum of two copies of this
manifold along two copies of a symplectic sphere
in the respective cap coming from a spherical
fibration. The resulting manifold would split along a copy
of $S^2\times S^1$. However, according to~\cite[Lemma~15]{bafr},
a closed $4$-manifold with $b_2^+>1$ that splits along
$S^2\times S^1$ into two manifolds with $b_2^+>0$ cannot be symplectic.
This contradiction proves that no such manifold $(W,\Omega)$ exists.

\subsection{Confoliations}
\label{subsection:confol}

Yet another proof of Theorem~\ref{thm:non-ex}
has been suggested by Fran Presas. Again one starts from
a supposed counterexample $(W,\Omega)$ to Theorem~\ref{thm:non-ex}.
By the theory of confoliations, notably \cite[Theorem~2.4.1]{elth98},
the boundary components which are non-spherical
symplectic fibrations can be turned into convex contact boundaries
by adding a collar to the boundary carrying a suitable
symplectic form. Cap off all boundary components
with spherical fibres. Then we arrive at a contradiction
as in Section~\ref{subsection:contradiction}.
\section{Proof of Theorem~\ref{thm:ex} (a)}
Theorem~\ref{thm:ex}~(a) will be an obvious consequence of the two lemmata
we prove in this section, together with the transitivity of the cobordism
relation.

\begin{lem}
\label{lem:g-g}
Any two symplectic fibrations $(V^i,\omega^i)$, $i=0,1$,
of the same genus~$g$ (including the case $g=0$) and the same fibre area
are cobordant.
\end{lem}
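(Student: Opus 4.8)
The plan is to reduce the statement to a statement about area-preserving diffeomorphisms of a fixed surface $\Sigma_g$, and then to build an explicit cobordism by interpolating between two such diffeomorphisms through a path in the group of symplectomorphisms. Recall from the discussion preceding the lemma that a symplectic fibration $(V,\omega)$ of genus $g$ and fibre area $A$ is determined, up to fibre-preserving diffeomorphism carrying $\omega$ to $\omega$, by its area-preserving holonomy diffeomorphism $\phi\in\mathrm{Symp}(\Sigma_g,\sigma)$, where $\sigma$ is a fixed area form of total area $A$; the fibration is the mapping torus of $\phi$ with the obvious closed $2$-form. Conjugate holonomies give the same fibration, so what we really have are two conjugacy classes $[\phi_0],[\phi_1]$ in $\mathrm{Symp}(\Sigma_g,\sigma)$, and we must produce a cobordism between the corresponding mapping tori.

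First I would observe that it suffices to treat the case $\phi_1=\mathrm{id}$: if we can cobound any mapping torus with the \emph{trivial} fibration $\Sigma_g\times S^1$ (with $\omega$ the pullback of $\sigma$), then transitivity and the symmetry that will follow give the general statement. So fix $\phi\in\mathrm{Symp}(\Sigma_g,\sigma)$ and aim to build $(W,\Omega)$ with $\partial W = M_\phi \sqcup (-(\Sigma_g\times S^1))$, where $M_\phi$ is the mapping torus. The model to keep in mind is $W = \Sigma_g\times P$ for a suitable surface-with-corners $P$, or rather a twisted version thereof: take $[0,1]\times[0,1]$ with coordinates $(s,t)$, form $[0,1]\times\R$, and quotient by $(s,t)\sim(s,t+1)$ after gluing via a \emph{family} of symplectomorphisms $\psi_s$ with $\psi_0=\mathrm{id}$ and $\psi_1=\phi$. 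Concretely, let $\Psi\colon [0,1]\times\Sigma_g\to[0,1]\times\Sigma_g$ depend on $s$, and set $W$ to be the mapping-torus-like object $\bigl([0,1]\times\R\times\Sigma_g\bigr)/(s,t,x)\sim(s,t+1,\psi_s^{-1}(x))$. On the $s=0$ end this is $\Sigma_g\times S^1\times\{0\}$, on the $s=1$ end it is $M_\phi$. The candidate $2$-form is $\Omega=\sigma + d(s\wedge\beta) + \varepsilon\,ds\wedge dt$ (schematically), where $\beta$ is a connection $1$-form for the fibration in the $t$-direction chosen so that $\Omega$ is closed and restricts correctly on the two ends; the term $\varepsilon\,ds\wedge dt$ (or $ds\wedge\alpha$ for an appropriate $\alpha$) is what makes $\Omega$ nondegenerate in the base directions. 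The point is that $\mathrm{Symp}(\Sigma_g,\sigma)$ is connected for all $g$ (Moser, plus the fact that the symplectic mapping class group maps onto an index-related subgroup — but connectedness of $\mathrm{Symp}_0$ is all we need, together with being able to choose a path $\psi_s$ inside $\mathrm{Symp}_0$), so such a path exists whenever $\phi\in\mathrm{Symp}_0$; for general $\phi$ one instead cobounds $M_\phi$ with $M_{\phi_0}$ where $\phi_0$ is a normal-form representative in the same mapping class, and handles the finitely many mapping-class generators separately, or one simply allows the holonomy to be an arbitrary diffeomorphism in the cobordism's interior while keeping it area-preserving — the cobordism only sees $\phi$ through the collar.

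The main obstacle is \textbf{nondegeneracy of $\Omega$ near the corners / on all of $W$}, not closedness, which is essentially automatic from the construction. One has to check that the correction terms needed to make $\Omega$ restrict to $\omega^i$ on each boundary component and to the model $\omega^i+d(t\alpha^i)$ in the collars (per the symplectic neighbourhood theorem quoted after the definition of cobordism) can be chosen compatibly with the $s$-interpolation, and that the resulting closed $2$-form is symplectic throughout. The cleanest way I would organise this: (i) write $W$ as a fibre bundle over a compact oriented surface $P$ with two boundary circles, with fibre $\Sigma_g$, structure group $\mathrm{Symp}(\Sigma_g,\sigma)$, and prescribed monodromy; (ii) invoke the standard fact (Thurston's construction for symplectic fibrations over surfaces, which applies since the fibre class is $[\sigma]$ and is therefore represented by a closed form restricting correctly) to get a closed $2$-form that is symplectic on fibres; (iii) add $\pi^*\eta$ for a large-enough area form $\eta$ on the base $P$ to achieve global nondegeneracy; (iv) finally, \emph{correct near the boundary} so that the two ends and their collars are exactly the required models — this last adjustment is local and uses that both ends already have the correct fibrewise form, differing from the model only by an exact term that can be absorbed. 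Step (iv) is where the genuine care is needed, since one must respect the orientation conventions $\partial(W,\Omega) = (V^1,\omega^1)\sqcup(-V^0,\omega^0)$ and ensure the characteristic foliation points the right way on each end; but there is no rigidity here — any discrepancy is killed by a collar modification, which is exactly the flexibility the section heading promises.
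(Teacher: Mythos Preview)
Your approach has a genuine gap. The central claim that ``$\mathrm{Symp}(\Sigma_g,\sigma)$ is connected for all $g$'' is false for $g\geq 1$: Moser's argument shows that $\pi_0\bigl(\mathrm{Symp}(\Sigma_g,\sigma)\bigr)$ is isomorphic to the full (orientation-preserving) mapping class group, which is highly non-trivial. Consequently your path-of-symplectomorphisms construction over an annulus only produces a cobordism between $M_{\phi_0}$ and $M_{\phi_1}$ when $\phi_0$ and $\phi_1$ lie in the same mapping class. The fixes you sketch do not close this gap: ``handle the finitely many mapping-class generators separately'' is precisely the hard part and you give no mechanism for it; ``allow the holonomy to be an arbitrary diffeomorphism in the interior'' destroys the very hypothesis (structure group $\mathrm{Symp}$) that makes the Thurston construction produce a fibrewise-symplectic closed $2$-form. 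Your final reformulation as a $\Sigma_g$-bundle over a base surface $P$ with two boundary circles is more promising, but it still carries a topological obstruction you do not address: in such a bundle the product of the boundary monodromies must equal a product of commutators in the mapping class group, so you cannot freely prescribe an arbitrary $\phi$ on one boundary and $\mathrm{id}$ on the other unless $\phi$ lies in the commutator subgroup (and the mapping class group is not perfect for $g=1,2$).

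The paper sidesteps monodromy entirely. It invokes Eliashberg's capping theorem \cite[Theorem~3.1]{elia04}: each $(V^i,\omega^i)$ bounds (with the appropriate sign) a compact symplectic $4$-manifold $(W^i,\Omega^i)$ containing in its interior a symplectically embedded $\Sigma_g$ with trivial normal bundle and the prescribed area. One then performs a Gompf fibre connected sum of $(W^0,\Omega^0)$ and $(W^1,\Omega^1)$ along these two copies of $\Sigma_g$. The result is a connected symplectic $4$-manifold with boundary $(V^1,\omega^1)\sqcup(-V^0,\omega^0)$, as required. Note that this cobordism is \emph{not} itself a $\Sigma_g$-bundle, so the obstruction that blocks your approach simply does not arise.
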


In particular, given any symplectic fibration of genus~$g$,
there is a cobordism both from and to a trivial symplectic fibration
$\Sigma_g\times S^1$ with symplectic form pulled back from
an area form on $\Sigma_g$ of the appropriate total area.

\begin{proof}[Proof of Lemma~\ref{lem:g-g}]
By \cite[Theorem~3.1]{elia04} there are
compact symplectic $4$-manifolds $(W^i,\Omega^i)$, $i=0,1$, with
$\partial(W^1,\Omega^1)=(V^1,\omega^1)$ and
$\partial(W^0,\Omega^0)=(-V^0,\omega^0)$.  Each
$W^i$ contains a copy of $\Sigma_g$
embedded symplectically in the interior, with trivial normal bundle,
and both of the same area. Hence, we can perform a
fibre connected sum to produce the desired cobordism.
\end{proof}

\begin{lem}
\label{lem:trivial}
For any $g,g'>0$, there is a cobordism between any trivial
symplectic fibration of genus $g$ to any such fibration of genus~$g'$.
\end{lem}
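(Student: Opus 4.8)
The plan is to build the cobordism by symplectically summing three pieces: two Eliashberg null-cobordisms, one capping off each of the two given trivial fibrations, joined by a single copy of the building block $(X_g^{g'},\Omega_g^{g'})$ from Proposition~\ref{prop:X}, which can serve as a bridge precisely because it contains symplectic copies of both $\Sigma_g$ and~$\Sigma_{g'}$.

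First I would record a strengthening of Proposition~\ref{prop:X}: for any prescribed areas $a,a'>0$ there is a closed symplectic $4$-manifold as in Proposition~\ref{prop:X} in which the embedded copy of $\Sigma_g$ has area $a$ and the embedded copy of $\Sigma_{g'}$ has area~$a'$. This is immediate from the proof already given. One starts from a product symplectic form $\omega_g\oplus\omega_{g'}$ on $\Sigma_g\times\Sigma_{g'}$ with the two factor areas chosen to be $a$ and~$a'$; the Gompf perturbation making the Lagrangian torus $T$ symplectic can be taken supported in a neighbourhood of $T$ disjoint from the slices $\Sigma_g\times\{*\}$ and $\{*\}\times\Sigma_{g'}$, so that the areas of these slices are unaffected, and the subsequent symplectic sum of two copies of this manifold along $T$ leaves the (disjoint) slices intact. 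The key point, and the reason the two genera play no role against each other, is that here $a$ and $a'$ are completely decoupled.

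Now let $(V^0,\omega^0)=\Sigma_g\times S^1$ and $(V^1,\omega^1)=\Sigma_{g'}\times S^1$ be the two given trivial fibrations, of fibre areas $a$ and $a'$ respectively. By \cite[Theorem~3.1]{elia04}, exactly as used in the proof of Lemma~\ref{lem:g-g}, there are compact, connected symplectic $4$-manifolds $(W_0,\Omega_0)$ and $(W_1,\Omega_1)$ with $\partial(W_1,\Omega_1)=(V^1,\omega^1)$ and $\partial(W_0,\Omega_0)=(-V^0,\omega^0)$, where $W_0$ contains a symplectically embedded copy of $\Sigma_g$ of area $a$ in its interior with trivial normal bundle, and $W_1$ contains such a copy of $\Sigma_{g'}$ of area~$a'$. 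Take $(X_g^{g'},\Omega_g^{g'})$ as in the strengthened proposition, with its copy of $\Sigma_g$ of area $a$ and its copy of $\Sigma_{g'}$ of area~$a'$. Since all of these surfaces have self-intersection zero and the relevant areas have been matched, we may form the symplectic sum
\[
(W,\Omega)=(W_0,\Omega_0)\mathbin{\#_{\Sigma_g}}(X_g^{g'},\Omega_g^{g'})\mathbin{\#_{\Sigma_{g'}}}(W_1,\Omega_1),
\]
the first sum along the two copies of $\Sigma_g$ and the second along the two copies of $\Sigma_{g'}$, both in the sense of Gompf~\cite{gomp95}; these two sums take place in disjoint regions of $X_g^{g'}$ and so cause no interference. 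Because $X_g^{g'}$ is closed and the summing surfaces lie in the interiors, this operation creates no new boundary, and $\partial(W,\Omega)=\partial(W_1,\Omega_1)\sqcup\partial(W_0,\Omega_0)=(V^1,\omega^1)\sqcup(-V^0,\omega^0)$. Moreover $W$ is connected, since each summand is connected and each fibre sum glues along a connected hypersurface. Hence $(W,\Omega)$ is a cobordism from $\Sigma_g\times S^1$ to $\Sigma_{g'}\times S^1$. I expect the only point genuinely requiring care to be the bookkeeping of areas --- checking that the strengthened form of Proposition~\ref{prop:X} really does allow $a$ and $a'$ to be prescribed independently, which is what lets the bridge piece be glued simultaneously to both Eliashberg caps --- while everything else is a routine application of the symplectic sum.
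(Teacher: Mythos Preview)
Your argument is correct, but it follows a different route from the paper's. The paper builds the cobordism directly and with lighter tools: it takes the two half-open pieces $\Sigma_g\times\bigl(T^2\setminus\Int(D^2)\bigr)$ and $\Sigma_{g'}\times\bigl(T^2\setminus\Int(D^2)\bigr)$, each already having the correct trivial fibration as its single boundary component, perturbs a Lagrangian torus in each to a symplectic one (of the same small area~$\varepsilon$), and performs a single Gompf sum along those tori. Your construction instead assembles three pieces --- two Eliashberg caps from \cite[Theorem~3.1]{elia04} and the closed bridge $X_g^{g'}$ --- via two fibre sums along higher-genus surfaces. Your approach is more modular, reusing Proposition~\ref{prop:X} and the mechanism behind Lemma~\ref{lem:g-g} verbatim; the paper's is more elementary and self-contained, avoiding any appeal to Eliashberg's capping theorem. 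One small inaccuracy: the Gompf perturbation $\varepsilon(\alpha\wedge\beta+\rmd\gamma)$ is not literally supported near $T$ (the $\alpha\wedge\beta$ term lives near the dual torus), but your conclusion still holds because this term pulls back to zero on each factor slice, so the slice areas are unaffected.
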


Changing the orientation of the $S^1$-factor in the
trivial symplectic fibration defines an orientation-reversing diffeomorphism
of this fibration to itself, so we can ignore issues of orientation
in the following proof.

\begin{proof}[Proof of Lemma~\ref{lem:trivial}]
Let $(V_g=\Sigma_g\times S^1,\omega_g)$ and
$(V_{g'}=\Sigma_{g'}\times S^1,\omega_{g'})$ be two trivial
symplectic fibrations, where we think of $\omega_g$ as both
the positive area form on $\Sigma_g$ and the $2$-form on $V_g$,
likewise for $\omega_{g'}$.

Choose an area form $\omega_{T^2}$ on $T^2\setminus\Int(D^2)$,
the $2$-torus with an open disc removed. Equip
$\Sigma_g\times\bigl(T^2\setminus\Int(D^2)\bigr)$
with the product symplectic form $\omega_g\oplus\omega_{T^2}$.
Now let $T=a\times b$ be a Lagrangian torus in this symplectic manifold
as in the proof of Proposition~\ref{prop:X}, i.e.\ choose
a simple closed curve $a$ in $\Sigma_g$ representing
a generator of $H_1(\Sigma_g)$, and a simple closed
curve $b$ in $T^2\setminus\Int(D^2)$ representing a generator of the
relative homology group $H_1(T^2\setminus\Int(D^2),\partial D^2)$.
As before we now appeal to \cite[Lemma~1.6]{gomp95}. Since that lemma
is formulated for closed manifolds only, we are a little more
explicit. There are closed $1$-forms $\alpha,\beta$ supported
near the dual curve of $a,b$ on $\Sigma_g, T^2\setminus\Int(D^2)$,
respectively, with
\[ \int_a\alpha=\int_b\beta=1,\;\;\text{hence}\;\;\int_T\alpha\wedge\beta=1.\]
We think of $\alpha$ and $\beta$ as $1$-forms on the symplectic
$4$-manifold by pulling them back under the projection to
one of the two factors.

Let $\eta$ be any area form on $T$ of total area~$1$. With $j$
denoting the inclusion of $T$ in the symplectic $4$-manifold,
the $2$-form $\eta-j^*(\alpha\wedge\beta)$ integrates to zero
over $T$ and hence equals an exact $2$-form $\rmd\gamma$ on~$T$.
Extend $\gamma$ to a $1$-form on the whole $4$-manifold,
supported near~$T$. Then, for any small $\varepsilon>0$,
the $2$-form $\omega_g\oplus\omega_{T^2}+\varepsilon(\alpha\wedge\beta
+\rmd\gamma)$ will be symplectic, and it pulls back to $\varepsilon\eta$
on~$T$, which makes that torus symplectic.

Now perform the same construction starting from $(\Sigma_{g'},\omega_{g'})$.
If we choose the same $\varepsilon$ in both instances, we can then
perform a fibre connected sum along the respective copies of $T$.
This results in the desired cobordism.
\end{proof}
\section{Proof of Theorem~\ref{thm:ex} (b)}
We now discuss the existence of cobordisms between
two symplectic fibrations of genus zero. The following proposition
is a special case of Lemma~\ref{lem:g-g}, but we include a direct
proof, since the case $g=0$ requires considerably less machinery.

\begin{prop}
There is a cobordism between any two spherical symplectic fibrations
of the same fibre area.
\end{prop}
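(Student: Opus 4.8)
The plan is to give a hands-on construction of a cobordism between two spherical symplectic fibrations $(V^0, \omega^0)$ and $(V^1, \omega^1)$ of equal fibre area, exploiting the fact that a symplectic $S^2$-bundle over $S^1$ is determined up to fibre-preserving diffeomorphism by its area-preserving holonomy, and that the group of area-preserving diffeomorphisms of $S^2$ is connected (indeed it deformation retracts onto $SO(3)$). First I would normalise the fibre area of both fibrations to, say, $1$ by scaling, and recall from the discussion preceding Definition~2 (the symplectic neighbourhood theorem) that near each boundary component the symplectic form looks like $\omega^i + \rmd(t\alpha^i)$ for a collar parameter $t$ and a $1$-form $\alpha^i$ nonvanishing on the characteristic foliation. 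So it suffices to produce a symplectic form on $V^0 \times [0,1]$ (after possibly first replacing $V^0$ by a fibre-preserving-diffeomorphic copy), interpolating between the required collar models at the two ends.

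The key step is to reduce to the \emph{trivial} spherical fibration $S^2 \times S^1$ with product area form. Given a spherical symplectic fibration $(V,\omega)$, its holonomy $\phi \in \mathrm{Symp}(S^2,\omega_{S^2})$ can be connected to the identity by a path $\phi_s$, $s \in [0,1]$, of area-preserving diffeomorphisms, with $\phi_0 = \mathrm{id}$ and $\phi_1 = \phi$, since $\mathrm{Symp}(S^2,\omega_{S^2})$ is connected. Thinking of the mapping torus of $\phi$ as built from $S^2 \times [0,1]$ by gluing $(x,1)$ to $(\phi(x),0)$, the path $\phi_s$ gives a smooth family of mapping tori, hence a surface bundle over $[0,1] \times S^1$ — equivalently, a $4$-manifold fibred over $S^1$ whose two boundary $S^1$-families of fibres are $S^2 \times S^1$ and $V$. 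One equips this total space with a closed $2$-form restricting to the area form on fibres (using that the path of diffeomorphisms is area-preserving, so the fibrewise areas fit together), and then adds a small multiple of a $1$-form pulled back from the base $S^1$, chosen nonvanishing on the characteristic foliation, to make it symplectic; this is exactly the collar-model recipe. Checking the boundary orientations come out as $V \sqcup -(S^2\times S^1)$ (or the reverse) is routine, and symmetry of the relation — needed because we must realise both ends — follows from the orientation-reversal remark combined with the flexibility, or can be arranged directly by running the path in both directions. By transitivity of the cobordism relation (established in the excerpt), $(V^0,\omega^0)$ and $(V^1,\omega^1)$ are each cobordant to $(S^2\times S^1, \text{product})$ of the same area, hence to each other.

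The main obstacle I expect is the \emph{symplecticity} at the joint between the collar model and the interior interpolating piece: the closed $2$-form built from the path of holonomies restricts to an area form on each fibre but is degenerate in the base directions, so one must add a correction $1$-form pulled back (or nearly pulled back) from the base that is positive against the characteristic foliation everywhere along a compact family, and simultaneously match the prescribed germs $\omega^i + \rmd(t\alpha^i)$ at $t=0,1$. Because the fibres are spheres and $H^2(S^2)$ is rank one, any closed fibrewise-positive $2$-form on the total space is, fibrewise, cohomologous to the area form, which is precisely why the $g=0$ case "requires considerably less machinery": there is no cohomological obstruction to the interpolation, unlike in positive genus where one genuinely needs the Gompf-type perturbation arguments of Lemma~\ref{lem:trivial}. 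Getting the signs and the nonvanishing condition to hold uniformly along the interpolation, while keeping the collar germs fixed, is the one place where a small but genuine computation is required; everything else is bookkeeping with mapping tori and the symplectic neighbourhood theorem.
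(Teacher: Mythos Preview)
Your approach is correct and shares the paper's strategic outline---reduce by transitivity to constructing cobordisms in both directions between a given spherical fibration $(V,\omega)$ and the product fibration---but the execution is different. You use path-connectedness of $\mathrm{Symp}(S^2,\omega_{S^2})$ to build a one-parameter family of mapping tori fibred over $[0,1]\times S^1$, equip it with the descended fibre form, and symplectify \`a la Thurston by adding a base area form. The paper instead exploits that on $S^2$ every symplectomorphism is \emph{Hamiltonian}: writing the holonomy as the time-$2\pi$ map of a periodic Hamiltonian $H_t$ bounded between constants $m<M$, it exhibits an explicit embedding $(x,t)\mapsto\bigl(x,\sqrt{H_t(x)}\,\rme^{-\rmi t}\bigr)$ of $V$ into $\bigl(S^2\times\C,\;\omega_0+\rmd(r^2\,\rmd\varphi)\bigr)$, so that the two desired cobordisms are simply the annular regions between the image hypersurface and the level sets $|z|=\sqrt{m}$, $|z|=\sqrt{M}$. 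This bypasses the Thurston correction entirely and delivers both directions at once, which is what the paper means by ``considerably less machinery''. Your concern about matching the collar germs $\omega^i+\rmd(t\alpha^i)$ is misplaced: by the symplectic neighbourhood theorem you already invoked, the germ of $\Omega$ near the boundary is determined up to symplectomorphism by $\Omega|_{TV^i}$, so matching the restrictions is enough. One small slip: the correction you need is an area form on the two-dimensional base $[0,1]\times S^1$ (e.g.\ $\varepsilon\,\rmd s\wedge\rmd t$), not ``a $1$-form pulled back from the base~$S^1$''.
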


\begin{proof}
The total space of any spherical symplectic fibration
is $S^2\times S^1$.  Any two product symplectic fibrations on this
space, where the $2$-form is pulled back from an area form
on~$S^2$, are diffeomorphic by the usual
Moser argument, provided they have the same total area on~$S^2$.

By the transitivity of the cobordism relation, it
suffices to show that given any spherical symplectic
fibration $(S^2\times S^1,\omega)$, we can find
cobordisms both to and from a product symplectic fibration
with the same fibre area.

The construction of such cobordisms is essentially given in the
proof of \cite[Lemma~3.5]{elia04}. Define $S^2_0=S^2\times\{0\}$,
where we identify $S^1$ with $\R/2\pi\Z$, and set
$\omega_0=\omega|_{TS^2_0}$. Since all symplectomorphisms
of $S^2$ are Hamiltonian, the holonomy of $(S^2\times S^1,\omega)$,
regarded as a symplectomorphism of $(S^2_0,\omega_0)$,
is given as the time-$2\pi$ flow of a $2\pi$-periodic time-dependent
Hamiltonian $H_t\co S^2_0\rightarrow\R$; the corresponding time-dependent
Hamiltonian vector field $X_t$ is given by $i_{X_t}\omega_0=-\rmd H_t$.
We may assume that there
are constants $m$ and $M$ with $0<m<H_t<M$. Define
an embedding $f\co S^2_0\times S^1\rightarrow S^2_0\times\C$ by
\[ (x,t)\longmapsto \bigl(x,\sqrt{H_t(x)}\,\rme^{-\rmi t}\bigr).\]
Then the split form $\Omega_0=\omega_0+\rmd(r^2\rmd\varphi)$
pulls back to~$\omega$. Indeed, we have
\[ f^*\Omega_0=\omega_0-\rmd\bigl(H_t(x)\,\rmd t\bigr)=
\omega_0+i_{X_t}\omega_0\wedge\rmd t,\]
which implies that the characteristic foliation of $f^*\Omega_0$ is
spanned by $\partial_t+X_t$, as desired.

It follows that the restriction of $\Omega_0$ to
\[\bigl\{ (x,z)\in S^2_0\times\C\co\sqrt{m}\leq|z|\leq\sqrt{H_t(x)}\bigr\}\]
and
\[\bigl\{ (x,z)\in S^2_0\times\C\co\sqrt{H_t(x)}\leq|z|\leq\sqrt{M}\bigr\}\]
defines a cobordism between $(S^2\times S^1,\omega)$ and
a product symplectic fibration, in one or the other direction.
\end{proof}

It remains to show that the fibre area is a cobordism invariant.
Thus, suppose we have a cobordism between two spherical symplectic
fibrations. Pick a fibre $S,S'$ in each of the two.
By capping off the cobordism we obtain a closed symplectic $4$-manifold
$(\wtW,\wtOm)$ containing two symplectic spheres $S,S'$
of self-intersection zero. We may assume that
all exceptional spheres in the complement of $S\cup S'$
have been blown down. If each of the remaining exceptional
spheres intersects $S$, then the pair
$(\wtW,S)$ is minimal. We can then choose
a compatible almost complex structure on $\wtW$ for which
both $S$ and $S'$ are holomorphic, and which is regular for
the class~$[S]$. The classification
result of McDuff~\cite{mcdu90}, applied to the pair $(\wtW,S)$
then tells us that $S$ is a fibre in a holomorphic fibration,
and by positivity of intersection $S'$ must coincide with
one of the fibres, so the areas of $S$ and $S'$ are equal.

We conclude the proof by demonstrating that indeed every exceptional
sphere that intersects $S'$ must also intersect $S$. Here we argue
as in the light cone lemma of~\cite{lili95}. According to
McDuff~\cite{mcdu90}, cf.\ \cite[Theorem~A]{wend14}, $(\wtW,\wtOm)$ is
symplectomorphic to a blow-up of $\CP^2$ (with a rescaled Fubini--Study
form) or a blow-up of a symplectically ruled surface. In either case
$\wtW$ has $b_2^+=1$, so we can choose a basis $a,a_1,\ldots,a_n$
of $H_2(\wtW;\R)$ for which the intersection
product is diagonal with $a^2=1$ and $a_i^2=-1$ for $i=1,\ldots,n$.
Write
\[ [S] = \lambda a+\sum_{i=1}^n\lambda_ia_i,\;\;\;\;
[S'] = \mu a+\sum_{i=1}^n\mu_ia_i.\]
Since $S$ and $S'$ are disjoint and of self-intersection~$0$, we have
\[ \lambda\mu-\sum_i\lambda_i\mu_i=0,\;\;\;\;
\lambda^2-\sum_i\lambda_i^2=0,\;\;\;\;
\mu^2-\sum_i\mu_i^2=0.\]
The Cauchy--Schwarz inequality gives
\[ |\lambda\mu|=\sqrt{\sum_i\lambda_i^2}\sqrt{\sum_i\mu_i^2}\geq
\left|\sum_i\lambda_i\mu_i\right|=|\lambda\mu|.\]
So here we must have equality, and hence
\[ (\lambda_1,\ldots,\lambda_n) = r(\mu_1,\ldots,\mu_n)\]
for some $r\in\R$. It follows further that $\lambda=\pm r\mu$.
From the positivity of $\bigl\langle\wtOm,[S]\bigr\rangle$
and $\bigl\langle\wtOm,[S']\bigr\rangle$ we see that
the negative sign is impossible, since the equations
coming from the intersection products would then imply that both
$[S']$ and $[S]$ are the zero class. Moreover, we must have $r>0$, i.e.\
the class $[S]$ is a positive multiple of~$[S']$.

As in Section~\ref{subsection:alt} we may assume that $S,S'$ and all
exceptional spheres are holomorphic. By positivity of intersection
we conclude that for any exceptional sphere $E$ that intersects $S'$
the intersection product $S'\bullet E$ is positive. Hence so is
$S\bullet E$, as we wanted to show.
\begin{ack}
We are grateful to \.Inan\c{c} Baykur for his comments on a draft
version of this paper, and to Fran Presas for the alternative proof
of Theorem~\ref{thm:non-ex} outlined in Section~\ref{subsection:confol}.
We also thank an anonymous referee for
suggesting the argument that establishes the fibre area as
a cobordism invariant of spherical symplectic fibrations.
\end{ack}


\begin{thebibliography}{10}
%
\bibitem{adac14}
{\sc J. Adachi},
Contact round surgery and symplectic round handlebodies,
\textit{Internat. J. Math.}
\textbf{25} (2014), 1450050, 25 pp.
%
\bibitem{adk05}
{\sc D. Auroux, S. K. Donaldson and L. Katzarkov},
Singular Lefschetz pencils,
\textit{Geom. Topol.}
\textbf{9} (2005), 1043--1114.
%
\bibitem{bayk09}
{\sc R. \.I. Baykur},
Topology of broken Lefschetz fibrations and near-symplectic four-manifolds,
\textit{Pacific J. Math.}
\textbf{240} (2009), 201--230.
%
\bibitem{bafr}
{\sc R. \.I. Baykur and S. Friedl},
Virtually symplectic fibred $4$-manifolds,
\textit{Indiana Univ. Math. J.}
\textbf{64} (2015), 983--999.
%
\bibitem{dgz14}
{\sc M. D\"orner, H. Geiges and K. Zehmisch},
Open books and the Weinstein conjecture,
\textit{Q. J. Math.}
\textbf{65} (2014), 869--885.
%
\bibitem{durs13}
{\sc S. Durst},
\textit{Round Handle Decompositions},
M.Sc. Thesis, Universit\"at zu K\"oln (2013).
%
\bibitem{elia04}
{\sc Ya. Eliashberg},
A few remarks about symplectic filling,
\textit{Geom. Topol.}
\textbf{8} (2004), 277--293.
%
\bibitem{elth98}
{\sc Ya. M. Eliashberg and W. P. Thurston},
\textit{Confoliations},
Univ. Lecture Ser. \textbf{13},
American Mathematical Society,
Providence, RI (1998).
%
\bibitem{etny04}
{\sc J. B. Etnyre},
Planar open book decompositions and contact structures,
\textit{Int. Math. Res. Not.}
\textbf{2004} (2004), 4255--4267.
%
\bibitem{geig95}
{\sc H. Geiges},
Examples of symplectic $4$-manifolds with disconnected boundary
of contact type,
\textit{Bull. London Math. Soc.}
\textbf{27} (1995), 278--280.
%
\bibitem{geig08}
{\sc H. Geiges},
\textit{An Introduction to Contact Topology},
Cambridge Stud. Adv. Math.
\textbf{109},
Cambridge University Press (2008).
%
\bibitem{geze13}
{\sc H. Geiges and K. Zehmisch},
How to recognize a $4$-ball when you see one,
\textit{M\"unster J. Math.}
\textbf{6} (2013), 525--554.
%
\bibitem{ginz92}
{\sc V. L. Ginzburg},
Calculation of contact and symplectic cobordism groups,
\textit{Topology}
\textbf{31} (1992), 767--773.
%
\bibitem{giro02}
{\sc E. Giroux},
G\'eom\'etrie de contact: de la dimension trois vers les
dimensions sup\'erieures, in
\textit{Proceedings of the International Congress of Mathematicians},
vol.~II, Higher Education Press, Beijing (2002).
%
\bibitem{gomp95}
{\sc R. E. Gompf},
A new construction of symplectic manifolds,
\textit{Ann. of Math. (2)}\/
\textbf{142} (1995), 527--595.
%
\bibitem{lili95}
{\sc T. J. Li and A. Liu},
Symplectic structures on ruled surfaces and a generalized adjunction formula,
\textit{Math. Res. Lett.}
\textbf{2} (1995), 453--471.
%
\bibitem{mcdu90}
{\sc D. McDuff},
The structure of rational and ruled symplectic $4$-manifolds,
\textit{J. Amer. Math. Soc.}
\textbf{3} (1990), 679--712.
%
\bibitem{mcdu91}
{\sc D. McDuff},
Symplectic manifolds with contact type boundaries,
\textit{Invent. Math.}
\textbf{103} (1991), 651--671.
%
\bibitem{mcsa98}
{\sc D. McDuff and D. Salamon},
\textit{Introduction to Symplectic Topology},
2nd edition, Oxford Math. Monogr.,
Oxford University Press (1998).
%
\bibitem{mcsa04}
{\sc D. McDuff and D. Salamon},
\textit{$J$-holomorphic Curves and Symplectic Topology},
Amer. Math. Soc. Colloq. Publ. {\bf 52},
American Mathematical Society, Providence, RI (2004).
%
\bibitem{wend14}
{\sc C. Wendl},
From ruled surfaces to planar open books: Holomorphic curves
in symplectic and contact manifolds of low dimension,
lecture notes (2014).
%
\end{thebibliography}
\end{document}